\documentclass[10pt,oneside]{amsart}

\usepackage[
paper=a4paper,
headsep=15pt,text={136mm,221mm},centering
]{geometry}
\usepackage{kbordermatrix}
\usepackage{multirow} 
\usepackage{tabstackengine}

\def\VR{\kern-\arraycolsep\strut\vrule &\kern-\arraycolsep}
\def\vr{\kern-\arraycolsep & \kern-\arraycolsep}

\usepackage{graphicx, pinlabel, color, xcolor}
\usepackage{blkarray}
\usepackage{array, boldline, makecell, booktabs}
\usepackage[bookmarks]{hyperref}
\usepackage{amssymb,amsxtra}
\usepackage{graphicx,xcolor}
\usepackage{float,array,calc,booktabs,stackrel}
\usepackage{enumitem}\setlist[enumerate,1]{font=\upshape}
\usepackage{tikz}
\usetikzlibrary{
  cd,
  calc,
  positioning,
  arrows,
  decorations.pathreplacing,
  decorations.markings,
}
\usepackage[mode=buildnew]{standalone}

\definecolor{todo-background-color}{gray}{0.95}
\usepackage[
  textwidth=1.1in,
  backgroundcolor=todo-background-color,
  bordercolor=black,
  linecolor=black,
  textsize=footnotesize
]{todonotes}

\iftrue
    
    \makeatletter
    \def\@settitle{%
      \vspace*{-10pt}
      \begin{flushleft}%
        \LARGE\bfseries
        \strut\@title\strut
      \end{flushleft}%
    }
    \def\@setauthors{%
      \begingroup
      \def\thanks{\protect\thanks@warning}%
      \trivlist
      \raggedright
      \large \@topsep27\p@\relax
      \advance\@topsep by -\baselineskip
    \item\relax
      \author@andify\authors
      \def\\{\protect\linebreak}%
      \authors
      \ifx\@empty\contribs
      \else
      ,\penalty-3 \space \@setcontribs
      \@closetoccontribs
      \fi
      \normalfont
      \endtrivlist
      \endgroup
    }
    \def\@setaddresses{\par
      \nobreak \begingroup
      \small\raggedright
      \def\author##1{\nobreak\addvspace\smallskipamount}%
      \def\\{\unskip, \ignorespaces}%
      \interlinepenalty\@M
      \def\address##1##2{\begingroup
        \par\addvspace\bigskipamount\noindent
        \@ifnotempty{##1}{(\ignorespaces##1\unskip) }%
        {\ignorespaces##2}\par\endgroup}%
      \def\curraddr##1##2{\begingroup
        \@ifnotempty{##2}{\nobreak\noindent\curraddrname
          \@ifnotempty{##1}{, \ignorespaces##1\unskip}\/:\space
          ##2\par}\endgroup}%
      \def\email##1##2{\begingroup
        \@ifnotempty{##2}{\nobreak\noindent E-mail address%
          \@ifnotempty{##1}{, \ignorespaces##1\unskip}\/:\space
          \ttfamily##2\par}\endgroup}%
      \def\urladdr##1##2{\begingroup
        \def~{\char`\~}%
        \@ifnotempty{##2}{\nobreak\noindent\urladdrname
          \@ifnotempty{##1}{, \ignorespaces##1\unskip}\/:\space
          \ttfamily##2\par}\endgroup}%
      \addresses
      \endgroup
      \global\let\addresses=\@empty
    }
    \def\@setabstracta{%
      \ifvoid\abstractbox
      \else
      \skip@17pt \advance\skip@-\lastskip
      \advance\skip@-\baselineskip \vskip\skip@
      \box\abstractbox
      \prevdepth\z@ 
      \vskip-28pt
      \fi
    }
    \renewenvironment{abstract}{%
      \ifx\maketitle\relax
      \ClassWarning{\@classname}{Abstract should precede
        \protect\maketitle\space in AMS document classes; reported}%
      \fi
      \global\setbox\abstractbox=\vtop \bgroup
      \normalfont\small
      \list{}{\labelwidth\z@
        \leftmargin0pc \rightmargin\leftmargin
        \listparindent\normalparindent \itemindent\z@
        \parsep\z@ \@plus\p@
        
      }%
    \item[\hskip\labelsep\bfseries\abstractname.]%
    }{%
      \endlist\egroup
      \ifx\@setabstract\relax \@setabstracta \fi
    }

    \def\ps@headings{\ps@empty
      \def\@evenhead{%
        \setTrue{runhead}%
        \normalfont\scriptsize
        \rlap{\thepage}\hfill
        \def\thanks{\protect\thanks@warning}%
        \leftmark{}{}}%
      \def\@oddhead{%
        \setTrue{runhead}%
        \normalfont\scriptsize
        \def\thanks{\protect\thanks@warning}%
        \rightmark{}{}\hfill \llap{\thepage}}%
      \let\@mkboth\markboth
    }\ps@headings

    \def\section{\@startsection{section}{1}%
      \z@{-1.4\linespacing\@plus-.5\linespacing}{.8\linespacing}%
      {\normalfont\bfseries\Large}}
    \def\subsection{\@startsection{subsection}{2}%
      \z@{-.8\linespacing\@plus-.3\linespacing}{.5\linespacing\@plus.2\linespacing}%
      {\normalfont\bfseries\large}}
    \def\subsubsection{\@startsection{subsubsection}{3}%
      \z@{.7\linespacing\@plus.2\linespacing}{-1.5ex}%
      {\normalfont\itshape}}
    \def\paragraph{\@startsection{paragraph}{4}%
      \z@{.7\linespacing\@plus.2\linespacing}{-1.5ex}%
      {\normalfont\itshape}}
    \def\@secnumfont{\bfseries}

    \renewcommand\contentsnamefont{\bfseries}
    \def\@starttoc#1#2{\begingroup
      \setTrue{#1}%
      \par\removelastskip\vskip\z@skip
      \@startsection{}\@M\z@{\linespacing\@plus\linespacing}%
      {.5\linespacing}{
        \contentsnamefont}{#2}%
      \ifx\contentsname#2%
      \else \addcontentsline{toc}{section}{#2}\fi
      \makeatletter
      \@input{\jobname.#1}%
      \if@filesw
      \@xp\newwrite\csname tf@#1\endcsname
      \immediate\@xp\openout\csname tf@#1\endcsname \jobname.#1\relax
      \fi
      \global\@nobreakfalse \endgroup
      \addvspace{32\p@\@plus14\p@}%
      \let\tableofcontents\relax
    }
    \def\contentsname{Contents}
    \def\l@section{\@tocline{2}{.5ex}{0mm}{5pc}{}}
    \def\l@subsection{\@tocline{2}{0pt}{2em}{5pc}{}}
    \makeatother

\fi

\def\to{\mathchoice{\longrightarrow}{\rightarrow}{\rightarrow}{\rightarrow}}
\makeatletter
\newcommand{\shortxra}[2][]{\ext@arrow 0359\rightarrowfill@{#1}{#2}}
\def\longrightarrowfill@{\arrowfill@\relbar\relbar\longrightarrow}
\newcommand{\longxra}[2][]{\ext@arrow 0359\longrightarrowfill@{#1}{#2}}

\makeatother

\makeatletter
\def\addtagsub#1{\let\oldtf=\tagform@\def\tagform@##1{\oldtf{##1}\hbox{$_{#1}$}}}
\makeatother

\makeatletter
\def\Nopagebreak{\@nobreaktrue\nopagebreak}
\makeatother

\makeatletter
\newtheoremstyle{theorem-giventitle}
        {}{}              
        {\itshape}                      
        {}                              
        {\bfseries}                     
        {.}                             
        {\thm@headsep}                             
        {\thmnote{\bfseries#3}}
\newtheoremstyle{theorem-givenlabel}
        {}{}              
        {\itshape}                      
        {}                              
        {\bfseries}                     
        {.}                             
        {\thm@headsep}                             
        {\thmname{#1}~\thmnumber{#3}\setcurrentlabel{#3}}
\newtheoremstyle{definition-giventitle}
        {}{}              
        {}                      
        {}                              
        {\bfseries}                     
        {.}                             
        {\thm@headsep}                             
        {\thmnote{\bfseries#3}}
\def\setcurrentlabel#1{\gdef\@currentlabel{#1}}
\makeatother

\newtheorem{theorem}{Theorem}[section]

\newtheorem{proposition}[theorem]{Proposition}

\newtheorem{thmx}{Theorem}

\theoremstyle{definition}
\newtheorem{definition}[theorem]{Definition}

\newtheorem{remark}[theorem]{Remark}

\newtheorem*{case2'}{Case 2$'$}

\theoremstyle{theorem-giventitle}
\newtheorem{theorem-named}{}
\theoremstyle{theorem-givenlabel}
\newtheorem{theorem-labeled}{Theorem}

\theoremstyle{definition-giventitle}
\newtheorem{definition-named}{}
\newtheorem{conjecture-named}{}
\newtheorem{case-named}{}

\numberwithin{equation}{section}

\def\d{\partial}
\def\Z{\mathbb{Z}}

\def\Q{\mathbb{Q}}

\newcommand{\Id}{\operatorname{Id}}

\newcommand{\rk}{\operatorname{rank}}

\makeatletter


\begin{document}

\title{Pretzel links, mutation, and the slice-ribbon conjecture}

\author{Paolo Aceto}
\address{Mathematical Institute University of Oxford, Oxford, United Kingdom}
\email{paoloaceto@gmail.com}
\urladdr{http://www.maths.ox.ac.uk/people/paolo.aceto/}
\author{Min Hoon Kim}
\address{Department of Mathematics, Chonnam National University, Gwangju  61186, Republic of Korea}
\email{kminhoon@gmail.com}
\urladdr{http://minhoon.kim}
\author{JungHwan Park}
\address{School of Mathematics, Georgia Institute of Technology, Atlanta, GA, USA}
\email{junghwan.park@math.gatech.edu }
\urladdr{http://people.math.gatech.edu/~jpark929/}
\author{Arunima Ray}
\address{Max Planck Institut f\"{u}r Mathematik, Bonn, Germany}
\email{aruray@mpim-bonn.mpg.de }
\urladdr{http://people.mpim-bonn.mpg.de/aruray/}
\def\subjclassname{\textup{2020} Mathematics Subject Classification}
\expandafter\let\csname subjclassname@1991\endcsname=\subjclassname
\expandafter\let\csname subjclassname@2000\endcsname=\subjclassname
\subjclass{%
  57K40, 
  	57K10} 


\newcommand{\blue}{\textcolor{blue}}

\newcommand{\JP}[1]{[JP] \blue{#1}}

\begin{abstract}
Let $p$ and $q$ be distinct integers greater than one. 
We show that the $2$-component pretzel link $P(p,q,-p,-q)$ is not slice, even though it has a ribbon mutant, by using $3$-fold branched covers and an obstruction based on Donaldson's diagonalization theorem. As a consequence, we prove the slice-ribbon conjecture for $4$-stranded $2$-component pretzel links.  
\end{abstract}

\maketitle

\section{Introduction}
A link $L\subset S^3$ is \emph{slice} if it bounds a collection of disjoint, smoothly embedded disks in~$D^4$, and it is \emph{ribbon} if it bounds a collection of immersed disks in $S^3$ with only ribbon singularities. 
Ribbon links are slice links since we can remove ribbon singularities by pushing them into the interior of $D^4$. 
The \emph{slice-ribbon conjecture}, an outstanding open problem due to Fox \cite{Fox:1961-1}, states that all slice knots are ribbon.  In this article, we address the natural generalization to 2-component links.

A significant positive result, due to Lisca \cite{Lisca:2007-1}, shows that the slice-ribbon conjecture holds for $2$-bridge knots. Specifically, Lisca showed that a $2$-bridge knot is ribbon if and only if its double branched cover, which is a rational homology sphere, bounds a rational homology ball. 
The lattice-theoretic obstruction coming from Donaldson's theorem was combined with Heegaard-Floer invariants by Greene
and Jabuka, who showed that the slice-ribbon conjecture is true for all $3$-stranded pretzel knots $P(p,q,r)$ with $p,q,r$ odd~ \cite{Greene-Jabuka:2011-1}. 
In \cite{Lecuona:2012-1}, Lecuona used similar techniques as Lisca to prove the slice-ribbon conjecture for a large infinite family of $3$-stranded Montesinos knots.  
Related questions were addressed by~\cite{Lecuona:2015-1,Miller:2017-1,Choe-Park:2018-1}. 

We prove the slice-ribbon conjecture for $4$-stranded $2$-component pretzel links. Recall that a \emph{$4$-stranded pretzel link} $P(p,q,r,s)$ is a link admitting a diagram as in Figure \ref{fig:pretzel_link} where each $p$, $q$, $r$, $s$ denotes the number of half twists in the box.

\begin{thmx}\label{thm:main}
The slice-ribbon conjecture holds for $4$-stranded $2$-component pretzel links.
\end{thmx}

The proof has two parts. The first is the following theorem, derived from the work of \cite{Aceto:2020-1} where he follows similar methods as Lisca. In the upcoming paper of Aceto \cite{Aceto:2019-1}, he applies the obstructions from \cite{Aceto:2020-1} to a larger family of 2-component links.

\begin{theorem}\label{thm:ribbon-up-to-mutation}
Suppose that the $4$-stranded $2$-component pretzel link $L$ is slice. 
Then $L$ is isotopic to either $P(p,q,-p,-q)$ or $P(p,-p,q,-q)$ for some integers $p$ and~$q$.
\end{theorem}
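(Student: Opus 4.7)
The proof plan follows Lisca's paradigm, extended to $2$-component links as in \cite{Aceto:2015-1}. Since $L$ is slice, its two components bound disjoint smooth disks in $D^4$, so in particular $\operatorname{lk}(L_1, L_2)=0$ and the double branched cover $\Sigma := \Sigma_2(L)$ is a rational homology $S^1\times S^2$. Moreover, taking the double branched cover of $D^4$ along these two slice disks produces a smooth, compact $4$-manifold $W$ with $\partial W = \Sigma$ whose rational homology is that of $S^1\times D^3$.

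Next, because $P(p,q,r,s)$ is a Montesinos link, $\Sigma$ is Seifert fibered over $S^2$ with at most four singular fibers encoded by the (negative) continued fraction expansions of $p,q,r,s$, and bounds a canonical star-shaped plumbed $4$-manifold $X_P$ with a single central vertex and four linear arms. By analyzing the sign of $\tfrac{1}{p}+\tfrac{1}{q}+\tfrac{1}{r}+\tfrac{1}{s}$, and possibly reversing the orientation of $\Sigma$ if necessary, one can arrange $X_P$ to be negative definite.

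Gluing $-X_P$ to $W$ along $\Sigma$ produces a closed, oriented, smooth $4$-manifold $X$. The Mayer--Vietoris sequence, together with $H_2(W;\mathbb{Q})=0$ and the fact that the nontrivial class in $H_1(\Sigma;\mathbb{Q})$ maps isomorphically onto $H_1(W;\mathbb{Q})$, shows that the intersection form of $X$ is, up to rational equivalence, that of $X_P$, hence negative definite. Donaldson's diagonalization theorem \cite{Donaldson:1987-1} then forces an isometric lattice embedding of the intersection form of $X_P$ into the standard diagonal lattice $-\mathbb{Z}^n$ with $n = \operatorname{rank} H_2(X_P)$.

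The principal obstacle is the resulting lattice combinatorics: one must classify all embeddings of $4$-legged star plumbings, with weights coming from continued fractions of $p,q,r,s$, into $-\mathbb{Z}^n$. Most candidate embeddings must be excluded via chain-of-vectors arguments in the spirit of Lisca and Greene--Jabuka, adapted to the four-arm setting and supplemented by the constraint $\operatorname{lk}(L_1,L_2)=0$ (which cuts down the possible pretzel parameters at the outset). Translating the surviving configurations back into pretzel data should force $\{p,q,r,s\}$, as a multiset, to equal $\{p,q,-p,-q\}$; and since distinct cyclic re-orderings of a $4$-tuple can produce distinct pretzel link types, this leaves precisely the two isotopy classes $P(p,q,-p,-q)$ and $P(p,-p,q,-q)$ claimed in the conclusion.
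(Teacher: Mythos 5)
Your overall strategy---double branched cover, rational homology $S^1\times D^3$ filling, star-shaped plumbing, Donaldson's theorem, lattice embeddings---is indeed the machinery behind this theorem, but as written the proposal has three genuine gaps. First, and most importantly, essentially all of the mathematical content lives in the step you defer with ``should force'': classifying the embeddings of the four-armed star plumbings into the diagonal lattice and translating the survivors back into pretzel data. The paper does not redo this combinatorics either; it reduces to the already-established classification in \cite[Theorem~5.5]{Aceto:2015-1}, which says that a Seifert fibered rational homology $S^1\times S^2$ over $S^2$ (with up to four exceptional fibers) bounds a rational homology $S^1\times D^3$ only if its Seifert invariants occur in complementary pairs and its Euler number vanishes; for $\Sigma_2(P(a,b,c,d))$ this forces $\{a,b,c,d\}=\{p,-p,q,-q\}$. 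Without either carrying out that lattice analysis or invoking such a result, your argument is a plan rather than a proof.

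Second, the definiteness claim fails in exactly the case that matters. Since $L$ has two components and is slice, $\lk(L_1,L_2)=0$ and $\Sigma_2(L)$ is a rational homology $S^1\times S^2$, so its Seifert Euler number is $0$ for both orientations; the canonical star-shaped plumbing is then only negative \emph{semidefinite}, never definite, and no orientation reversal repairs this. One must use the semidefinite version of the obstruction (\cite[Proposition~3.3]{Aceto:2015-1}, i.e.\ Theorem~\ref{thm:obstruction} here), which requires the additional hypothesis $\rank(Q)=b_2-b_1(\partial)$. Third, you skip the degenerate parameter values, where the ``four-fiber Seifert space'' picture breaks down: if some parameter is $0$ the link splits and $\Sigma_2(L)$ is a connected sum of lens spaces; if one or more parameters are $\pm1$ one gets a three-fiber Seifert space or a lens space, handled respectively by \cite[Theorem~5.5]{Aceto:2015-1} and by the uniqueness of the $2$-bridge link with a given lens space as double cover. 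These cases produce exactly the $P(0,0,q,-q)$ and $P(p,-p,1,-1)$ members of the claimed families, so they cannot be absorbed into the generic argument.
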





Observe that the link $P(p,-p, q,-q)$ is ribbon, as shown in Figure~\ref{fig:ribbon_move}. Additionally, $P(p,q,-p,-q)$ is ribbon whenever $|p| = |q|$ or either $|p|$ or $|q|$ is one. It remains to consider links of the form $P(p,q,-p,-q)$. 
In the proof of Theorem~\ref{thm:ribbon-up-to-mutation}, we show that the $2$-component link $P(0,q,0,-q)$ is not slice for any $q$. 
Additionally, the links $P(p,q,r,s)$ and $P(q,r,s,p)$ are isotopic. Consequently, the following result completes the proof of Theorem~\ref{thm:main}.

\begin{theorem}\label{thm:mutants-not-concordant}
Let $p$ and $q$ be integers such that $1<p<q$. 
Then the $2$-component pretzel link $P(p,q,-p,-q)$ is not slice.
\end{theorem}

\begin{figure}[htb]
\includegraphics[scale=1]{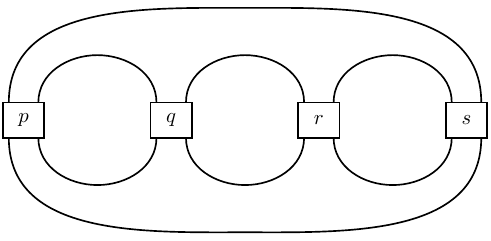}
\caption{The pretzel link $P(p,q,r,s)$.}\label{fig:pretzel_link}
\end{figure}

An embedded $2$-sphere $S$ in $S^3$ is called a \emph{Conway sphere} for a link $L$ if $S$ meets $L$ transversely in exactly four points. 
By cutting $S^3$ along $S$ and regluing via an orientation-preserving involution on $S$ which preserves $L\,\cap\,S$ setwise and does not fix any points in $L\,\cap\,S$, we obtain a new link in $S^3$ with the same number of components as $L$. 
This new link is called a \emph{mutant} of $L$. 
If $L$ is an oriented link and the orientation of $L$ is preserved by the involution on $S$, the new link is called a \emph{positive mutant} of $L$.

\begin{figure}[htb]
\includegraphics[scale=1]{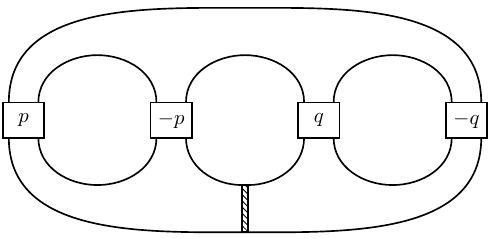}
\caption{The pretzel link $P(p,-p,q,-q)$ is ribbon via the band move shown above.}\label{fig:ribbon_move}
\end{figure}

Note that the links $P(p,q,-p,-q)$ and $P(p,-p,q,-q)$ are mutants, and indeed are positive mutants under a choice of orientation. In other words, Theorem~\ref{thm:mutants-not-concordant} shows that the link $P(p,q,-p,-q)$ is not slice, even though it has a slice mutant, namely $P(p,-p,q,-q)$. This is often difficult, since mutant links have diffeomorphic double branched covers~\cite{Viro:1976-1}. Kearton, based on the work of Livingston~\cite{Livingston:1983-1}, gave the first example of non-concordant mutant knots~\cite{Kearton:1989-1}. Since then, work has focused on distinguishing between positive mutants. Kirk and Livingston constructed a pair of non-concordant positive mutants in~\cite{Kirk-Livingston:1999-2}, and later gave an infinite family of examples in~\cite{Kirk-Livingston:2001-1}. These, as well as later work by~\cite{Herald-Kirk-Livingston:2010-1, Kim-Livingston:2005-1, Miller:2017-2}, use Casson-Gordon invariants and twisted Alexander polynomials associated with higher-order branched covers. 
Examples of non-concordant positive mutant links were constructed by Cha using Milnor's invariants~\cite{Cha:2006-1}. 
The computations involved in these three approaches - Milnor's invariants, Casson-Gordon invariants, and twisted Alexander polynomials -  are rather daunting in general, which is reflected in the paucity of known examples of non-concordant mutant knots and links. 

In the context of proving the slice-ribbon conjecture for families of links, mutant links did not arise in the work of Lisca since any lens space occurs as a branched double cover of a unique link in $S^3$~\cite{Hodgson-Rubinstein:1985-1}. The papers of Greene-Jabuka, Lecuona, and Choe-Park mentioned above sidestep mutant knots since there is no non-trivial mutation within the families they consider. However, the problem remains for more general families of knots and links if using double branched covers, as seen in the work of~\cite{Long:2014-1,Bryant:2017-1}.

In this article, we use a novel technique to distinguish mutant links in concordance. Specifically, we combine the use of higher-order branched covers with obstructions based on Donaldson's theorem. Even more specifically, we follow Lisca's strategy from~\cite{Lisca:2007-1} as follows. Let $L$ be an $n$-component slice link and let $\Sigma_{m^r}(L)$ denote the $m^r$-fold cyclic branched cover of $S^3$ along $L$. First, we generalize a result of Casson-Gordon by showing that $\Sigma_{m^r}(L)$ is a rational homology $\#_{i=1}^{(m^r-1)(n-1)}S^1\times S^2$ and bounds a rational homology $\natural_{i=1}^{(m^r-1)(n-1)}S^1\times D^3$, for any prime $m$ and integer $r$ (Proposition~\ref{prop:betti-number-two-piece}). Combined with Donaldson's theorem, this provides a slicing obstruction (Theorem~\ref{thm:obstruction}), which states that if $L$ is slice and $\Sigma_{m^r}(L)$ bounds a smooth, simply connected, positive semidefinite $4$-manifold $X$ such that $\operatorname{rank}(Q_X)=b_2(X)-b_1(\Sigma_{m^r}(L))$, then there is a morphism of integral lattices $$(H_2(X),Q_X)\rightarrow (\Z^{\operatorname{rank}(Q_X)}, \Id).$$ In the previous work mentioned above, using double branched covers of $2$-bridge knots and Montesinos links, there is a canonical choice of such a manifold $X$, obtained as a plumbing. However, in our case, since we use higher-order branched covers, we are required to construct these $4$-manifolds \textit{ad hoc}. In Section~\ref{sec:proof}, we explicitly construct the $3$-fold cover of $S^3$ branched along a pretzel link of the form $P(p,q,-p,-q)$ with $1<p<q$ and a $4$-manifold $X$ satisfying the hypotheses of Theorem~\ref{thm:obstruction}. Then we show that there is no such lattice morphism $(H_2(X),Q_X)\rightarrow (\Z^{\operatorname{rank}(Q_X)}, \Id).$  This completes the proof.

\begin{remark}Note that there is a band move from the 2-component link $P(p,q,-p,-q)$ to the knot $P(p\pm 1,q,-p,-q)$. This observation, along with Theorem~1.4 of \cite{Miller:2017-2}, gives a different proof of Theorem~\ref{thm:mutants-not-concordant}, for a particular subfamily of 4-strand 2-component pretzel links. 
\end{remark}

\begin{remark}It is natural to ask if our method can be applied to other families of knots and links. When looking at pretzel links with more than four strands, several complications arise, which make it difficult to implement our technique in practice. For instance, the number of possible mutant links corresponding to a fixed unordered set of coefficients increases rapidly as the number of strands increases. Also, the description of the 3-fold cover is more involved and in some cases it is not clear to us how to construct semidefinite fillings.\end{remark}

%
%
%

\subsection*{Acknowledgements} This project started when PA was visiting the Korea Institute for Advanced Study. PA wishes to thank the Korea Institute for Advanced Study for their hospitality. Part of this work was done when MHK was visiting the Max-Planck-Institut f\"{u}r Mathematik. MHK was
partly supported by the POSCO TJ Park Science Fellowship and NRF grant 2019R1A3B2067839. Frank Swenton's Kirby calculator was extremely useful in the early stages of this project. We are also grateful to the anonymous referee for their thoughtful suggestions.

\section{A link slicing obstruction and the proof of Theorem~\ref{thm:ribbon-up-to-mutation}}\label{sec:obstruction}

The main goals of this section are to prove Theorem~\ref{thm:obstruction} which gives a  link slicing obstruction and to prove Theorem~\ref{thm:ribbon-up-to-mutation}.

A \emph{homology bouquet of $n$ circles} is a finite CW-complex $X$ with a homology equivalence $f\colon \bigvee_{i=1}^n S^1\to X$. We similarly define a \emph{rational homology bouquet of $n$ circles} by requiring $f$ to be a rational homology equivalence. With a meridian map, the slice disk complement of an $n$-component slice link is a homology bouquet of $n$ circles. (Meridian maps are homology equivalences by Alexander duality.) For each integer $k>0$ and a homology bouquet of $n$ circles $X$, let $X_k$ be the $k$-fold cyclic cover of $X$ corresponding to 
\begin{center}$\pi_1(X)\to H_1(X)\cong H_1(\bigvee_{i=1}^n S^1)\to \Z_k$\end{center}
 where the first map is the Hurewicz map, the second map is induced from $f$, and the last map sends each circle summand to $1\in \Z_k$. 
\begin{proposition}\label{prop:Levine}If $X$ is a homology bouquet of $n$ circles, then $X_{m^r}$ is a rational homology bouquet of $m^r(n-1)+1$ circles for every prime $m$. 
\end{proposition}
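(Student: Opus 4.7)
The plan is to compute $H_*(X_{m^r};\Q)$ using the long exact sequence arising from the infinite cyclic cover $X_\infty\to X$, which corresponds to the composition $\pi_1(X)\to H_1(X)\cong\Z^n\to\Z$ summing the standard basis. Specifically, the short exact sequence of chain complexes $0\to C_*(X_\infty)\xrightarrow{t-1}C_*(X_\infty)\to C_*(X)\to 0$, and its $m^r$-analog with $(t^{m^r}-1)$ in place of $(t-1)$, yields the relevant long exact sequences. First, one verifies the conclusion directly for $Y=\bigvee_{i=1}^n S^1$: the cellular chain complex computes $H_0(Y_\infty;\Z)=\Z$ (trivial $t$-action), $H_1(Y_\infty;\Z)\cong\Z[t^{\pm 1}]^{n-1}$, and $H_i(Y_\infty;\Z)=0$ for $i\geq 2$, from which the claim for $Y_{m^r}$ follows immediately.

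For general $X$, the integral long exact sequence combined with $H_i(X;\Z)=0$ for $i\geq 2$ shows that $(t-1)$ acts bijectively on $H_i(X_\infty;\Z)$ for $i\geq 2$, while at degree one, $(t-1)$ is injective on $M:=H_1(X_\infty;\Z)$ with cokernel $\Z^{n-1}$. Letting $M_T\subseteq M$ denote the $\Z[t^{\pm 1}]$-torsion submodule, the snake lemma applied to $0\to M_T\to M\to M/M_T\to 0$ (using that $(t-1)$ is injective on the $\Z[t^{\pm 1}]$-torsion-free module $M/M_T$) embeds the cokernel of $(t-1)|_{M_T}$ into $\Z^{n-1}$. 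This cokernel is finite (as $M_T$ is annihilated by some polynomial $g(t)$, making $M_T/(t-1)M_T$ $g(1)$-torsion), hence trivial, so $(t-1)$ is bijective on $M_T$ over $\Z$.

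The key input is the following integrality lemma: if $(t-1)$ acts bijectively on a finitely generated $\Z[t^{\pm 1}]$-module $N$, then for every prime $p$ and every integer $k\geq 1$, the cyclotomic polynomial $\Phi_{p^k}(t)$ does not divide the $\Q[t^{\pm 1}]$-annihilator of $N\otimes\Q$. Indeed, modulo $p$ one has $\Phi_{p^k}(t)\equiv(t-1)^{\phi(p^k)}$, so if $\Phi_{p^k}$ divided the rational annihilator then $(t-1)$ would be a zero divisor on $N\otimes\mathbb{F}_p$, contradicting the bijectivity modulo $p$ inherited from that over $\Z$. Applying the lemma to $M_T$ and to $H_i(X_\infty;\Z)$ for $i\geq 2$, we deduce that $(t^{m^r}-1)=\prod_{k=0}^r\Phi_{m^k}(t)$ acts bijectively on $M_T\otimes\Q$ and on $H_i(X_\infty;\Q)$ for all $i\geq 2$.

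Finally, the long exact sequence for $X_{m^r}=X_\infty/m^r\Z$ with $\Q$-coefficients reads
\[
0\to H_n(X_\infty;\Q)/(t^{m^r}-1)\to H_n(X_{m^r};\Q)\to \ker\bigl((t^{m^r}-1)|_{H_{n-1}(X_\infty;\Q)}\bigr)\to 0.
\]
For $n\geq 2$, both outer terms vanish. For $n=1$, writing $H_1(X_\infty;\Q)\cong\Q[t^{\pm 1}]^{n-1}\oplus T$ with $T=M_T\otimes\Q$ (the rank $n-1$ being determined by the rational cokernel of $(t-1)$, which equals $\Q^{n-1}$, combined with the fact that $(t-1)$ is bijective on $T$), the free part contributes $\Q[\Z/m^r]^{n-1}$ of dimension $m^r(n-1)$, the torsion part contributes zero, and the kernel on $H_0(X_\infty;\Q)=\Q$ contributes one more dimension, giving $\dim_\Q H_1(X_{m^r};\Q)=m^r(n-1)+1$. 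A map $\bigvee_{i=1}^{m^r(n-1)+1}S^1\to X_{m^r}$ realizing a $\Z$-basis of $H_1(X_{m^r};\Z)$ modulo torsion is then the required rational homology equivalence. The main obstacle is the combination of the snake lemma step and the integrality lemma, which together force the torsion of $H_1(X_\infty;\Q)$ to avoid prime-power roots of unity.
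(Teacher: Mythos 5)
Your route is genuinely different from the paper's. You work with the total‑linking‑number infinite cyclic cover $X_\infty$ and Milnor's exact sequences for $t-1$ and $t^{m^r}-1$, reducing everything to module theory over $\Z[t^{\pm 1}]$. The paper never introduces $X_\infty$: it compares the relative $\Z[\Z_{m^r}]$-chain complex $C_*(X_{m^r},Y_{m^r})$ with $C_*(X,Y;\Z_m)$, invokes Levine's chain-homotopy lifting lemma to conclude $H_*(X_{m^r},Y_{m^r};\Z_m)=0$, and finishes with an Euler characteristic count. Your approach is more self-contained and makes the structure of $H_1(X_\infty;\Q)$ explicit, at the cost of more bookkeeping with torsion submodules; it is essentially the classical Casson--Gordon argument adapted to $n$ components, and its overall architecture is sound.

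There is, however, a gap at the heart of your integrality lemma, which is the key step. The implication ``$\Phi_{p^k}$ divides the rational annihilator of $N\otimes\Q$, hence $(t-1)$ is a zero divisor on $N\otimes\mathbb{F}_p$'' does not follow as stated: a class of $N\otimes\Q$ killed by $\Phi_{p^k}$ only lifts to some $x\in N$ with $c\,\Phi_{p^k}(t)x=0$ for an integer $c$, and the interaction of this $\Z$-torsion with reduction mod $p$ is exactly the delicate point (for instance $x$ may lie in $pN$, in which case you learn nothing). The lemma is true, but the correct bridge runs in the other direction and uses finite generation essentially: since $(t-1)$ is bijective on $N$, it is bijective on $N/pN$, hence so is $\Phi_{p^k}(t)\equiv(t-1)^{\phi(p^k)}\pmod p$; the snake lemma for $0\to N\xrightarrow{p}N\to N/pN\to 0$ then shows that $K=\Ker\Phi_{p^k}$ and $C=\Coker\Phi_{p^k}$ are uniquely $p$-divisible; both are finitely generated modules over $\Z[t^{\pm1}]/(\Phi_{p^k})\cong\Z[\zeta_{p^k}]$ with $pK=K$ and $pC=C$, so the determinant trick produces annihilators of the form $1+pa\neq 0$, forcing $K$ and $C$ to be finite and $\Phi_{p^k}$ to be bijective on $N\otimes\Q$. (Alternatively, a resultant computation $\operatorname{Res}(\Delta,\Phi_{p^k})\equiv\Delta(1)^{\phi(p^k)}\pmod p$ achieves the same.) A second, smaller gap: your finiteness argument for $\Coker\bigl((t-1)|_{M_T}\bigr)$ needs an annihilator $g$ with $g(1)\neq 0$, which a generic annihilator need not satisfy; it is available here because $(t-1)$ is injective on $M_T$, so any annihilator $(t-1)^k h$ with $h(1)\neq 0$ can be replaced by $h$. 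With these two repairs your argument goes through.
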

\begin{proof} Let $Y=\bigvee_{i=1}^n S^1$ and $f\colon Y\to X$ be a homology equivalence. By changing $X$ to the mapping cylinder of $f$, we may assume that $f\colon Y\to X$ is an injective cellular map and choose a finite, relative CW-complex structure of $(X,Y)$. Let $Y_{m^r}$ be the pull-back covering of $X_{m^r}$ on $Y$. By definition, $Y_{m^r}$ is the covering of $Y$ corresponding to $\pi_1(Y)\to \Z_{m^r}$ which sends each circle summand of $Y$ to $1$, and hence $Y_{m^r}$ is homeomorphic to $\bigvee_{i=1}^{m^r(n-1)+1}S^1$.

Consider a cellular chain complex $C_*=C_*(X_{m^r},Y_{m^r})$ of finitely generated free $\Z[\Z_{m^r}]$-modules. Note that $C_*\otimes_{\Z[\Z_{m^r}]}\Z_{m}$ is the cellular chain complex $C_*(X,Y;\Z_m)$. Hence, by Alexander duality,
\[H_i(C_*\otimes_{\Z[\Z_{m^r}]}\Z_m)\cong H_i(X,Y;\Z_m)\cong 0\]
for all~$i$. By Levine's chain homotopy lifting argument (see \cite[p.\ 89 and p.\ 95]{Levine:1994-1} and \cite[Lemma~3.2]{Cha:2010-1}),
\[H_i(C_*\otimes_{\Z}\Z_m)\cong H_i(X_{m^r},Y_{m^r};\Z_m) \cong 0\]
for every~$i$. By the homology long exact sequence of the pair $(X_{m^r},Y_{m^r})$,  $$H_i(X_{m^r};\Z_m)\cong H_i(Y_{m^r};\Z_m)$$ for every $i$. Since $Y_{m^r}$ is homeomorphic to $\bigvee_{i=1}^{m^r(n-1)+1}S^1$, $H_i(Y_{m^r})$ is trivial if $i\geq 2$. By the universal coefficient theorem, $H_i(X_{m^r})$ is a finite abelian group whose order is coprime to $m$ if $i\geq 2$. Since $X_{m^r}$ is connected, 
\[1-b_1(X_{m^r})=\chi(X_{m^r})=m^r\chi(X)=m^r\chi(Y)=m^r(1-n).\]
It follows that $b_1(X_{m^r})=m^r(n-1)+1$ and $b_i(X_{m^r})=0$ for $i\geq 2$. By the universal coefficient theorem, $X_{m^r}$ is a rational homology bouquet of $m^r(n-1)+1$ circles.
\end{proof}

Using Proposition~\ref{prop:Levine}, we prove the link analogue of the well-known  fact that every prime power fold branched cover of $S^3$ along a slice knot bounds a rational homology ball \cite[Lemma~2]{Casson-Gordon:1986-1}. 

\begin{proposition}\label{prop:betti-number-two-piece}
Suppose that $L$ is an oriented $n$-component slice link and $m$ is a prime. Let $\Sigma_{m^r}(L)$ be the $m^r$-fold cyclic branched cover of $S^3$ along $L$. Then  the following holds. 
\begin{enumerate}
\item \label{item:branchedcoverofdisk}$\Sigma_{m^r}(L)$ bounds a rational homology $\natural_{i=1}^{(m^r-1)(n-1)}S^1\times D^3$.
\item\label{item:branchedcoveroflink}  $\Sigma_{m^r}(L)$ is a rational homology $\#_{i=1}^{(m^r-1)(n-1)}S^1\times S^2$.
\end{enumerate}
\end{proposition}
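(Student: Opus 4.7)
The plan is to exhibit a $4$-manifold $V$ with $\partial V = \Sigma_{m^r}(L)$ witnessing (1), and then derive (2) from (1) via Poincar\'e--Lefschetz duality. Choose disjoint smoothly embedded slice disks $D = D_1 \sqcup \cdots \sqcup D_n$ for $L$ in $D^4$, and let $W = D^4 \setminus \nu(D)$ be the slice disk exterior. By Alexander duality, the meridian map makes $W$ a homology bouquet of $n$ circles, so by Proposition~\ref{prop:Levine} the unbranched $m^r$-fold cyclic cover $W_{m^r}$ (corresponding to the meridian-sum map to $\Z_{m^r}$) is a rational homology bouquet of $m^r(n-1) + 1$ circles. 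Let $V$ be the $m^r$-fold cyclic cover of $D^4$ branched along $D$; since each $D_i$ is contractible, its normal $D^2$-bundle is trivial, so $V = W_{m^r} \cup \bigl(\bigsqcup_{i=1}^n D^2 \times \tilde{D}_i\bigr)$, where $\tilde{D}_i$ is the branch component over $D_i$, and $\partial V = \Sigma_{m^r}(L)$ as desired.

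To compute $H_*(V;\Q)$, I would use the long exact sequence of the pair $(V, W_{m^r})$. By excision, $H_*(V, W_{m^r};\Q) \cong \bigoplus_i H_*(D^2, S^1;\Q) \cong \Q^n$, concentrated in degree $2$. Combined with $H_*(W_{m^r};\Q)$ vanishing above degree $1$, this gives $H_i(V;\Q) = 0$ for $i \geq 3$ and the four-term exact sequence
\[
0 \to H_2(V;\Q) \to \Q^n \xrightarrow{\partial} H_1(W_{m^r};\Q) \to H_1(V;\Q) \to 0.
\]
The crucial claim is that $\partial$ is injective. The image of the $i$-th generator is the class $[\tilde\mu_i]$ of the meridian of the $i$-th branch component; regarded as a loop in $W_{m^r}$, $\tilde\mu_i$ is the unique connected lift of the meridian $\mu_i$ of the $i$-th component of $L$, and it covers $\mu_i$ with degree $m^r$. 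Hence the covering projection $p\colon W_{m^r}\to W$ satisfies $p_*[\tilde\mu_i] = m^r[\mu_i]$, and since $[\mu_1],\dots,[\mu_n]$ form a basis of $H_1(W;\Q) \cong \Q^n$, linear independence of the $[\tilde\mu_i]$ in $H_1(W_{m^r};\Q)$ follows. Consequently $H_2(V;\Q) = 0$ and $H_1(V;\Q) \cong \Q^{m^r(n-1)+1-n} = \Q^{(m^r-1)(n-1)}$, establishing (1).

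For (2), applying Poincar\'e--Lefschetz duality to $V$ over $\Q$ yields $H_i(V,\partial V;\Q) \cong \Q$ in degree $4$, $\Q^{(m^r-1)(n-1)}$ in degree $3$, and zero otherwise. The long exact sequence of the pair $(V,\partial V)$ then immediately produces $H_1(\partial V;\Q) \cong H_2(\partial V;\Q) \cong \Q^{(m^r-1)(n-1)}$, matching the rational homology of $\#^{(m^r-1)(n-1)} S^1 \times S^2$.

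The only nontrivial step is the injectivity of $\partial$ in the second paragraph, which I expect to be the main obstacle. Everything else is routine bookkeeping enabled by Proposition~\ref{prop:Levine}; the subtlety is in correctly identifying the boundary classes with the lifted meridians and then extracting linear independence via push-forward along the covering map.
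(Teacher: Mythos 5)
Your proposal is correct and follows essentially the same route as the paper: decompose the branched cover as the unbranched cover (a rational homology bouquet of circles by Proposition~\ref{prop:Levine}) plus the $n$ branch-locus neighborhoods, and detect the lifted meridians in $H_1$ by pushing forward along the covering projection to $m^r$ times the meridians of $W$. The only cosmetic differences are that you use the long exact sequence of the pair and rational injectivity of $\partial$ where the paper uses Mayer--Vietoris and an integral splitting; part (2) is identical.
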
 
\begin{proof} \eqref{item:branchedcoverofdisk} Let $D$ be the union of slice disks for~$L$ and $\Sigma_{m^r}(D)$ be the $m^r$-fold cyclic cover of $D^4$ branched along $D$. Since $\partial \Sigma_{m^r}(D)=\Sigma_{m^r}(L)$, it suffices to show that $\Sigma_{m^r}(D)$ is a rational homology $\natural_{i=1}^{(m^r-1)(n-1)}S^1\times D^3$.

Let $X$ be the complement of $D$ in~$D^4$. By Alexander duality, a meridian map $f\colon \bigvee_{i=1}^n S^1\to X$ is a homology equivalence, and hence $X$ is a homology bouquet of $n$ circles. By Proposition~\ref{prop:Levine}, $X_{m^r}$ is a rational homology bouquet of $m^r(n-1)+1$ circles. In particular, 
\[b_k(X_{m^r})=\begin{cases}
 m^r(n-1)+1&\mbox{if $k=1$}\\
0&\mbox{if $k>1$.}\end{cases}\] By definition, $\Sigma_{m^r}(D)$ is obtained from $X_{m^r}$ by attaching $n$ 2-handles $H_i$. The attaching curve of each 2-handle $H_i$ is the lift of $m^r$ times the corresponding  meridian of~$D$ in~$X$.  By the Mayer-Vietoris sequence applied to $\Sigma_{m^r}(D)=\bigsqcup_{i=1}^nH_i\cup X_{m^r}$, the following is exact:
\[\bigoplus_{i=1}^n H_1(\d_+H_i )\to  H_1(X_{m^r})\to H_1(\Sigma_{m^r}(D))\to 0\]
where $\d_+H_i$ is the attaching region of the 2-handle $H_i$. The covering map $ X_{m^r}\to X$ induces a homomorphism $H_1(X_{m^r})\to H_1(X)$ whose image is the index $m^r$ subgroup, generated by $m^r$ times meridian of $L$. By restricting $H_1(X_{m^r})\to H_1(X)$ onto its image, we obtain a surjective map $H_1(X_{m^r})\to \Z^n$, which is a splitting of the composition $\Z^n\cong\bigoplus_{i=1}^n H_1(\d_+H_i )\to H_1(X_{m^r})$.  It follows that $H_1(X_{m^r})\cong \Z^n\oplus H_1(\Sigma_{m^r}(D))$, and hence
\[b_1(\Sigma_{m^r}(D))=b_1(X_{m^r})-n=m^r(n-1)+1-n=(m^r-1)(n-1).\]
Since $\d_+H_i\cong S^1\times D^2$, $H_k(\d_+H_i)=0$ for all $k>1$. Hence, by looking at the former part of the Mayer-Vietoris sequence, $H_k(\Sigma_{m^r}(D))\cong H_k(X_{m^r})$, and hence $b_k(\Sigma_{m^r}(D))= b_k(X_{m^r})=0$ for any $k>1$.  

 \eqref{item:branchedcoveroflink} By Poincar\'{e} duality and the universal coefficient theorem, 
 \[H_k(\Sigma_{m^r}(D),\Sigma_{m^r}(L);\Q)\cong H^{4-k}(\Sigma_{m^r}(D);\Q)\cong \operatorname{Hom}(H_{4-k}(\Sigma_{m^r}(D);\Q),\Q).\]
Hence, by the item \eqref{item:branchedcoverofdisk}, 
\[H_k(\Sigma_{m^r}(D),\Sigma_{m^r}(L);\Q)\cong \begin{cases}\Q &\mbox{if $k=4$}\\
\Q^{(m^r-1)(n-1)}&\mbox{if $k=3$}\\
0&\mbox{otherwise.}\end{cases}\]
From the homology long exact sequence of the pair $(\Sigma_{m^r}(D),\Sigma_{m^r}(L))$, 
\[H_k(\Sigma_{m^r}(L);\Q)\cong \begin{cases}\Q &\mbox{if $k=0,3$}\\
\Q^{(m^r-1)(n-1)}&\mbox{if $k=1,2$.}\end{cases}\]
Hence, $\Sigma_{m^r}(L)$ is a rational homology $\#_{i=1}^{(m^r-1)(n-1)}S^1\times S^2$.
\end{proof}

We now apply the above to obtain an obstruction in terms of morphisms of integral lattices, which we first define.

\begin{definition}[Integral lattices and morphisms]
An \emph{integral lattice} is a pair $(G,Q)$ where $G$ is a finitely generated free abelian group and $Q\colon G\times G\rightarrow \Z$ is a symmetric bilinear form.  Let $(G,Q)$ and $(G',Q')$ be integral lattices. A group homomorphism $\phi\colon G\rightarrow G'$ such that $Q(x,y)=Q'(\phi(x),\phi(y))$ for all $x,y\in G$ is a \emph{morphism} from $(G,Q)$ to $(G',Q')$ also denoted by $\phi$.
\end{definition}
Combined with \cite[Proposition~3.3]{Aceto:2020-1}, we give a link slicing obstruction. 
\begin{theorem}\label{thm:obstruction} 
Let $L$ be a slice link and $m^r$ be a prime power. Suppose that the $m^r$-fold cyclic branched cover of $S^3$ along $L$, denoted by 
$\Sigma_{m^r}(L)$, bounds a smooth, simply connected, positive semidefinite $4$-manifold $X$ such that $\operatorname{rank}(Q_X)=b_2(X)-b_1(\Sigma_{m^r}(L))$, then there is a morphism of integral lattices $$(H_2(X),Q_X)\rightarrow (\Z^{\operatorname{rank}(Q_X)}, \Id).$$
\end{theorem}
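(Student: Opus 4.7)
The plan is to close up $X$ by gluing it to a rational homology handlebody supplied by the sliceness of $L$, and then apply Donaldson's diagonalization theorem to the resulting closed four-manifold. Concretely, Proposition~\ref{prop:betti-number-two-piece} produces a smooth four-manifold $W$ with $\partial W=\Sigma_{m^r}(L)$ that is a rational homology $\natural_{i=1}^{k}S^1\times D^3$, where $k=(m^r-1)(n-1)=b_1(\Sigma_{m^r}(L))$ and $n$ is the number of components of $L$. The central object is then the closed, smooth, oriented four-manifold $Z:=X\cup_{\Sigma_{m^r}(L)}(-W)$.

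To run Donaldson, one needs $Q_Z$ to be positive definite of the expected rank. Since $H_2(W;\Q)=0$ and $H_1(\Sigma_{m^r}(L);\Q)\to H_1(W;\Q)$ is an isomorphism (immediate from Proposition~\ref{prop:betti-number-two-piece} and the rational model of $W$), a Mayer--Vietoris computation yields $H_2(Z;\Q)\cong H_2(X;\Q)/\Im\bigl(H_2(\Sigma_{m^r}(L);\Q)\to H_2(X;\Q)\bigr)$. The long exact sequence of $(X,\partial X)$, combined with Poincar\'e--Lefschetz duality and the simple-connectivity of $X$, identifies this image with $\ker Q_X$, and the hypothesis $\operatorname{rank}(Q_X)=b_2(X)-b_1(\Sigma_{m^r}(L))$ forces a dimension match. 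Consequently $(H_2(Z;\Q),Q_Z)$ is isometric to $Q_X$ modulo its kernel, which is positive definite of rank $\operatorname{rank}(Q_X)$.

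Donaldson's theorem applied to $Z$ then produces an isometric embedding $(H_2(Z)/\mathrm{torsion},Q_Z)\hookrightarrow(\Z^{\operatorname{rank}(Q_X)},\Id)$, and the composition $H_2(X)\to H_2(Z)\to H_2(Z)/\mathrm{torsion}\hookrightarrow\Z^{\operatorname{rank}(Q_X)}$ gives the desired lattice morphism, since $\ker Q_X$ lands in zero and the intersection pairing is natural under inclusion. The main obstacle to this clean outline is verifying Donaldson's hypotheses for $Z$---in particular controlling $\pi_1(Z)$ and ensuring integrality of the final morphism in the presence of possible torsion in $H_2(Z;\Z)$. Rather than redo these verifications, I would invoke \cite[Proposition~3.3]{Aceto:2015-1}, which packages exactly the step from ``a rational homology $\#^k S^1\times S^2$ bounding both a rational homology $\natural^k S^1\times D^3$ and a semidefinite $X$ with the stated rank condition'' to the existence of the desired lattice morphism. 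Feeding it the ingredients produced by Proposition~\ref{prop:betti-number-two-piece} then completes the proof.
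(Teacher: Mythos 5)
Your proof is correct and is essentially identical to the paper's: both reduce the statement to Proposition~\ref{prop:betti-number-two-piece} (to obtain the rational homology $\natural^{k}S^1\times D^3$ filling) and then invoke \cite[Proposition~3.3]{Aceto:2015-1} to produce the lattice morphism. The Donaldson/Mayer--Vietoris outline you sketch is a fair account of what that cited proposition does internally, but the actual argument you settle on matches the paper's.
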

\begin{proof} By Proposition~\ref{prop:betti-number-two-piece}, $\Sigma_{m^r}(L)$  bounds a compact smooth $4$-manifold $Y$ which is a rational homology $\natural_{i=1}^{b_1(\Sigma_{m^r}(L))}S^1\times D^3$. There exists a morphism of integral lattices $(H_2(X),Q_X)\rightarrow (\Z^{\operatorname{rank}(Q_X)}, \Id)$ by \cite[Proposition~3.3]{Aceto:2020-1}. 
\end{proof}

Now we prove Theorem~\ref{thm:ribbon-up-to-mutation}. 

\begin{proof}[Proof of Theorem~\ref{thm:ribbon-up-to-mutation}]
We refer to \cite{Aceto:2020-1} for some notation and terminology used in this proof.
 Let $L=P(a,b,c,d)$ be a 2-component pretzel link, and assume that $L$ is slice. By Proposition \ref{prop:betti-number-two-piece},  $\Sigma_2(L)$ bounds a rational homology $S^1\times D^3$. 

Since $L$ is a $2$-component link, either none or exactly two of the parameters $a,b,c,d$ may be even. Suppose that one parameter is zero. Then it is easy to see that the other even parameter is twice the linking number between the two components of $L$ and thus must be zero since $L$ is slice. Then $L$ is either the split union of the unknot and $T_{2,n} \# T_{2,m}$ or the split union of $T_{2,n}$ and $T_{2,m}$ for some odd integers $n$, $m$. In particular, since $L$ is slice, $L$ is isotopic to $T_{2,n} \# T_{2,m}$. This link is slice if and only if $n= -m$ which can be easily seen by looking at the signature. Thus, in this case, $L$ is of the form $P(0,0,q,-q)$ for some odd integer $q$. 

From now on we may assume that the parameters $a,b,c,d$ are all non-zero. Suppose that exactly one parameter, say $a$, has absolute value one. In this case the branched double cover $\Sigma_2(L)$ is a Seifert fibered space over $S^2$ with three exceptional fibers. Then it follows from \cite[Theorem~5.5]{Aceto:2020-1} that $\Sigma_2(L)$ does not bound a rational homology~$S^1\times D^3$.

Now, suppose there is more than one coefficient with absolute value one. Then we can see through an isotopy that $L$ is a $2$-bridge link (alternatively, it is straightforward to see that $\Sigma_2(L)$ is a lens space, which implies that $L$ is a $2$-bridge link). Since the only lens space which is a rational homology $S^1\times S^2$ is $S^1\times S^2$ itself and lens spaces are double branched covers for a unique $2$-bridge link~\cite{Hodgson-Rubinstein:1985-1}, which is this case must be the $2$-component unlink. Thus, $L$ must be isotopic to $P(p,-p,1,-1)$ for some integer $p$. (Note that up to isotopy $\pm 1$ boxes can be put anywhere.)
 
From now on we may assume that all the coefficients have absolute value greater than one. The branched double cover
 $\Sigma_2(L)$ is a Seifert fibered space over $S^2$ with four exceptional fibers. More precisely we have
 $$
 \Sigma_2(L)=M(0;k,(a_1,a_2),(b_1,b_2),(c_1,c_2),(d_1,d_2))
 $$ 
 where $(a_1,a_2)=(a,1)$ if $a>0$ and $(a_1,a_2)=(a,a-1)$ if $a<0$ and similarly 
 for the other pairs. (Our notation coincides with that of \cite{Neumann-Raymond:seifert-manifolds}, see Figure~\ref{fig:Seifert} for a surgery diagram of $M(0;k,(a_1,a_2),(b_1,b_2),(c_1,c_2),(d_1,d_2))$.) \begin{figure}[htb]
\includegraphics[scale=1.2]{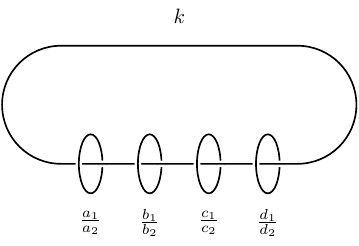}
\caption{A surgery diagram of $M(0;k,(a_1,a_2),(b_1,b_2),(c_1,c_2),(d_1,d_2))$.}\label{fig:Seifert}
\end{figure}The integer $k$ is the number of negative coefficients in 
 $\{a,b,c,d\}$. According to Theorem~5.5 in \cite{Aceto:2020-1}, $\Sigma_2(L)$ bounds a rational homology~$S^1\times D^3$ if and only if the Seifert invariants
 occur in complementary pairs and $e(\Sigma_2(L))=0$. Recall that the Seifert invariants $(a,b),(c,d)$ are said to be complementary 
 if $(c,d)=(a,a-b)$. In our situation these two 
 conditions are equivalent to $\Sigma_2(L)$ being of the form
 $$
 \Sigma_2(L)=M(0;2,(p,1),(p,p-1),(q,1),(q,q-1))
 $$
 for some $p,q>1$. From this description we can recover the coefficients defining 
 $L$. In particular we see that, possibly after renaming the coefficients,
 we have $\{a,b,c,d\}=\{p,-p,q,-q\}$. Recall that for any pretzel link $P(a_1,\dots,a_n)$ its isotopy class only depends on the ordered string of integers $(a_1,\dots,a_n)$ up to cyclic permutation and overall reversal of order. Thus $L$ is isotopic to either $P(p,-p,q,-q)$, $P(p,-p,-q,q)$, or $P(p,q,-p,-q)$. This completes the proof of Theorem~\ref{thm:ribbon-up-to-mutation}.
\end{proof}

%

%



\section{Proof of Theorem~\ref{thm:mutants-not-concordant}}\label{sec:proof}
Let $L$ denote the 2-component pretzel link $P(p,q,-p,-q)$, where $p$ and $q$ are distinct integers greater than one. Note that this link has two components if and only if both $p$ and $q$ are odd or, without loss of generality, $p$ is odd and $q$ is even. If $p$ is odd and $q$ is even each component of $L$ is a non-trivial torus knot, and thus $L$ is not slice. Hence, we may assume that $p$ and $q$ are odd integers such that $1<p<q$. Our goal is to prove that $L$ is not slice. In Section~\ref{subsection:constructionofX}, we show that $b_1(\Sigma_3(L))=2$ and that $\Sigma_3(L)$ bounds a compact, simply connected, positive definite 4-manifold $X$ such that $b_2(X)=\frac{3(p+q)}{2}+2$ and $\operatorname{rank}(Q_X)=b_2(X)-b_1(\Sigma_{3}(L))$. In Section~\ref{subsection:lattice}, we show that there is no lattice morphism $(H_2(X),Q_X)\rightarrow (\Z^{\operatorname{rank}(Q_X)}, \Id)$. By Theorem~\ref{thm:obstruction}, this will complete the proof of Theorem~\ref{thm:mutants-not-concordant}. 
\subsection{Construction of the 4-manifold $X$}\label{subsection:constructionofX}
\begin{figure}[htb!]
\includegraphics[scale=0.7]{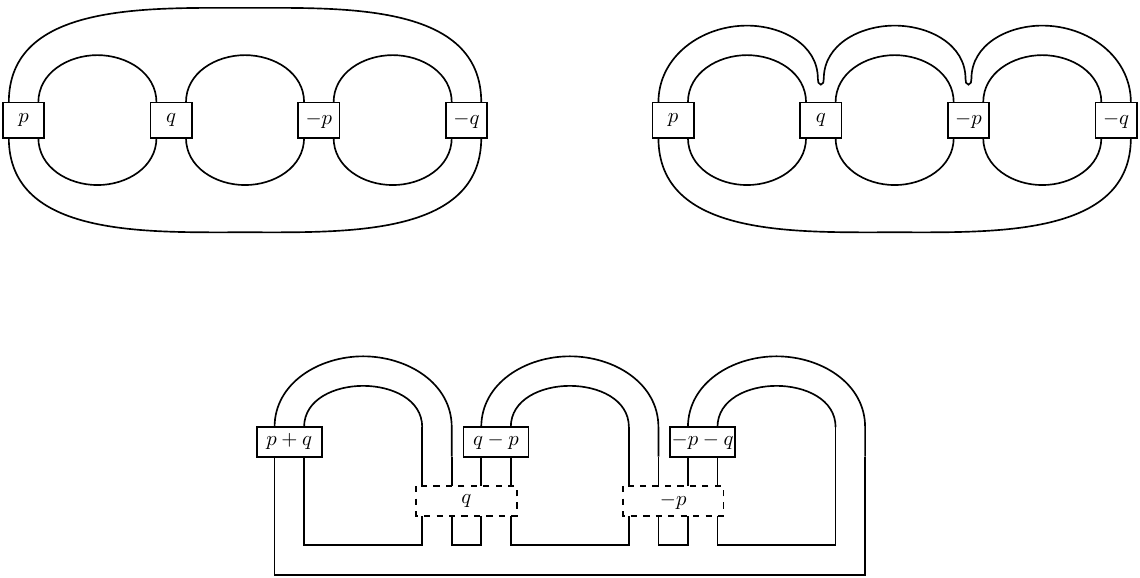}
\caption{A sequence of isotopies. Top left:\ the pretzel link $P(p,q,-p,-q)$. Bottom:\ An orientable Seifert surface has become visible. Throughout, the solid boxes indicate half-twists between all the strands passing through. In contrast, the dashed boxes indicate half-twists solely between the bands.}\label{fig:seifert_surface}
\end{figure}

Our first task is to build an orientable Seifert surface for $L$, which will assist us in drawing a surgery diagram for $\Sigma_3(L)$, the $3$-fold branched cover of $S^3$ along $L$. 
The link~$L$ is given in the top panel of Figure~\ref{fig:seifert_surface}. Perform the isotopy indicated in the second panel, which consists of pushing strands of $L$ through the two middle boxes. This results in the third picture, where the dashed boxes indicate half-twists between the bands and the solid boxes indicate half-twists between the two strands passing through. In the last diagram, an orientable Seifert surface for $L$ has become visible. Now, the standard process from~\cite{Akbulut-Kirby:branched-covers} gives a surgery diagram for $\Sigma_3(L)$, shown in Figure~\ref{fig:surgery_diagram}. We label the six curves in the surgery diagram $u_i$, $v_i$, $w_i$ ($i=1,2$) as indicated in the figure. The linking-framing matrix for this diagram is shown below. By row-reducing the matrix, we see that  $b_1(\Sigma_3(L))=2$.

\begin{figure}[htb]
\includegraphics[scale=1]{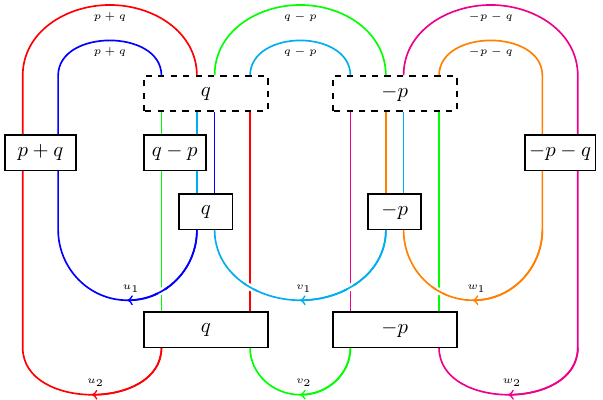}
\caption{A surgery diagram for $\Sigma_3(L)$. Note that the numbers in the boxes denote half-twists.}\label{fig:surgery_diagram}
\end{figure}

 \[\begingroup 
\setlength{\arraycolsep}{7pt}
\setlength{\kbrowsep}{7pt}
\kbordermatrix{
&\phantom{hh}u_1\phantom{hh}&\phantom{hh}u_2\phantom{hh}&\phantom{hh}v_1\phantom{hh}&\phantom{hh} v_2\phantom{hh}&\phantom{hh}w_1\phantom{hh}&\phantom{hh}w_2\phantom{hh}\cr
u_1&p+q&\frac{p+q}{2}&-q&\frac{-q+1}{2}&0&0\cr \\
u_2&\frac{p+q}{2}&p+q&\frac{-q-1}{2}&-q&0&0\cr \\
v_1&-q&\frac{-q-1}{2}&q-p&\frac{q-p}{2}&p&\frac{p+1}{2}\cr \\
v_2&\frac{-q+1}{2}&-q&\frac{q-p}{2}&q-p&\frac{p-1}{2}&p\cr \\
w_1&0&0&p&\frac{p-1}{2}&-p-q&\frac{-p-q}{2}\cr \\
w_2&0&0&\frac{p+1}{2}&p&\frac{-p-q}{2}&-p-q\cr
}
\endgroup
 \]

\begin{figure}[htb]
\includegraphics[scale=1]{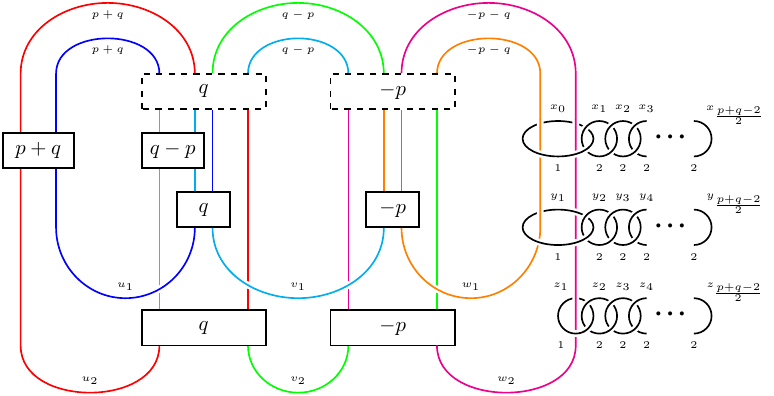}
\caption{A Kirby diagram for $X\# 2\overline{\mathbb{C}\mathbb{P}^2}$ whose boundary is $\Sigma_3(L)$. }\label{fig:building_X}
\end{figure}

\vspace{0.8cm}
Next, we construct a simply connected $4$-manifold $X$ with $\partial X=\Sigma_3(L)$ such that $(H_2(X),Q_X)$ is positive semidefinite with $\rk(Q_X)=b_2(X)-2$. The rough idea of the construction is to first look at the 3-manifold described by the sublink given by $(w_1, w_2)$ in Figure~\ref{fig:surgery_diagram}. This 3-manifold is a Seifert fibered space over the 2-sphere with 3 exceptional fibers. Such a space has a unique plumbing description given by its positive canonical plumbing graph. In our case, this plumbing graph is positive definite. We change the surgery description of $\Sigma_3(L)$ given by Figure~\ref{fig:surgery_diagram} so that the 2-component link $(w_1,w_2)$ is replaced by the surgery corresponding to the associated plumbing graph. The resulting surgery description of $\Sigma_3(L)$ also describes the simply connected $4$-manifold $X$ with $\partial X=\Sigma_3(L)$ since each surgery coefficient is an integer. We now give the details of this construction.

Blow up at a chain of curves, labeled $x_0,x_1,\dots, x_{\frac{p+q-2}{2}}$ to unlink $w_1$ and $w_2$. Next, blow up at two chains of curves, labeled $y_1,\dots, y_{\frac{p+q-2}{2}}$ and $z_1,\dots, z_{\frac{p+q-2}{2}}$ in the figure, to change the framing on $w_1$ and $w_2$ to~$-1$. The linking-framing matrix for this new diagram, shown in Figure~\ref{fig:building_X}, is given below. Only non-zero entries are shown.
\renewcommand{\arraystretch}{1.5}

\begin{Tiny}
 \[\begingroup 
\setlength{\arraycolsep}{1.3pt}
\setlength{\kbrowsep}{10pt}
\kbordermatrix{
&	u_1		&	u_2		&	& x_0	&	x_1	&	\cdots	&	x_{\frac{p+q-2}{2}}	&& y_1&y_2	&\cdots	& y_{\frac{p+q-2}{2}}&	&z_1	&z_2&\cdots&z_{\frac{p+q-2}{2}}&	&v_1&v_2&&w_1&w_2\cr
&p+q	&	\frac{p+q}{2}	&\vrule&	&		&		&		&	\vrule&	&		&		&		&\vrule&		&		&		&		&\vrule&	-q	&	\frac{-q+1}{2}	&\vrule&		&		\cr
&	\frac{p+q}{2}	&	p+q	&	\vrule&&		&		&		&	\vrule&	&		&		&		&	\vrule&	&		&		&		&\vrule&	\frac{-q-1}{2}	&	-q	&\vrule&		&		\cr
\BAhhline{&-----------------------}
&		&		&	\vrule&1	& 	1	&		&		&	\vrule&	&		&		&		&	\vrule&	&		&		&		&\vrule&		&		&\vrule&	1	&	1\cr
&		&		&\vrule&	1	&	2	&	\ddots	&		&	\vrule&	&		&		&		&	\vrule&	&		&		&		&\vrule&		&		&\vrule&		&	\cr
&		&		&	\vrule&	& 	\ddots	&	\ddots	&	1	&	\vrule&	&		&		&		&	\vrule&	&		&		&		&\vrule&		&		&\vrule&		&	\cr
&		&		&	\vrule&	& 		&	1	&	2	&	\vrule&	&		&		&		&	\vrule&	&		&		&		&\vrule&		&		&\vrule&		&	\cr
\BAhhline{&-----------------------}
&		&		&	\vrule&	&	&		&		&	\vrule&1	&	1	&		&		&	\vrule&	&		&		&		&\vrule&		&		&\vrule&	1	&	\cr
&		&		&	\vrule&	&		&		&		&\vrule&	1	&	2	&	\ddots	&		&	\vrule&	&		&		&		&\vrule&		&		&\vrule&		&	\cr
&		&		&	\vrule&	& 		&		&		&	\vrule&	&	\ddots	&	\ddots	&	1	&	\vrule&	&		&		&		&\vrule&		&		&\vrule&		&	\cr
&		&		&	\vrule&	& 		&		&		&	\vrule&	&		&	1	&	2	&	\vrule&	&		&		&		&\vrule&		&		&\vrule&		&	\cr
\BAhhline{&-----------------------}
&		&		&	\vrule&	& 		&		&		&	\vrule&	&		&		&		&\vrule&	1	&	1	&		&		&\vrule&		&		&\vrule&		&	1\cr
&		&		&	\vrule&	& 		&		&		&	\vrule&	&		&		&		&\vrule&	1	&	2	&	\ddots	&		&\vrule&		&		&	\vrule&	&	\cr
&		&		&	\vrule&	& 		&		&		&	\vrule&	&		&		&		&	\vrule&	&	\ddots	&	\ddots	&	1	&\vrule&		&		&\vrule&		&	\cr
&		&		&	\vrule&	& 		&		&		&	\vrule&	&		&		&		&	\vrule&	&		&	1	&	2	&\vrule&		&		&	\vrule&	&	\cr
\BAhhline{&-----------------------}
&	-q	&	\frac{-q-1}{2}	& 	\vrule&	&		&		&		&	\vrule&	&		&		&		&	\vrule&	&		&		&		&\vrule&	q-p	&	\frac{q-p}{2}	&\vrule&	p	&	\frac{p+1}{2}\cr
&	\frac{-q+1}{2}	&	-q	& 	\vrule&	&		&		&		&	\vrule&	&		&		&		&	\vrule&	&		&		&		&\vrule&	\frac{q-p}{2}	&	q-p	&\vrule&	\frac{p-1}{2}	&	p\cr
\BAhhline{&-----------------------}
&		&		&	\vrule&1	& 		&		&		&	\vrule&1	&		&		&		&	\vrule&	&		&		&		&\vrule&	p	&	\frac{p-1}{2}	&	\vrule&-1	&	\cr
&		&		&	\vrule&1	& 		&		&		&	\vrule&	&		&		&		&	\vrule&1	&		&		&		&\vrule&	\frac{p+1}{2}	&	p	&\vrule&		&	-1\cr
}
\endgroup
 \]
\end{Tiny}

Returning to Figure~\ref{fig:building_X} we see that that $w_1$ and $w_2$ are unknotted curves with framing~$-1$. Since they are unlinked, we can blow down both of them. The resulting manifold is what we call $X$. Hence, the Kirby diagram in Figure~\ref{fig:building_X} represents $X\#2\overline{\mathbb{C}\mathbb{P}^2}$. Since only blow ups and blow downs were performed, we see that $\partial X=\Sigma_3(L)$ as needed. Note that the linking-framing matrix for the Kirby diagram for $X$ obtained from the second diagram in Figure~\ref{fig:building_X} by blowing down $w_1$ and $w_2$  has dimension $\frac{3(p+q)}{2}+2$. Let $n$ denote the quantity $\frac{3(p+q)}{2}+2$ and let $Q_X$ denote the linking-framing matrix of this diagram. Rather than explicitly drawing the Kirby diagram, we can simply use the information from the linking-framing matrix on the previous page, to find $Q_X$, which is given below. 

\begin{Tiny}
 \[\begingroup 
\setlength\arraycolsep{1.3pt}
\setlength{\kbrowsep}{10pt}
\kbordermatrix{&	u_1	&	u_2	&&	x_0	&	x_1	&	\cdots	&	x_{\frac{p+q-2}{2}}	&&	y_1	&	y_2		&	\cdots	&	y_{\frac{p+q-2}{2}}	&&	z_1	&	z_2	&	\cdots	&	z_{\frac{p+q-2}{2}}	&&	v_1	&	v_2	\\
&	p+q	&	\frac{p+q}{2}	&	\vrule&	&		&		&		&\vrule&		&		&		&		&\vrule&	&		&		&		&\vrule&	-q	&	\frac{-q+1}{2}	\\
&	\frac{p+q}{2}	&	p+q	&	\vrule&	&		&		&		&\vrule&		&		&		&		&\vrule&		&		&		&		&\vrule&	\frac{-q-1}{2}	&	-q	\\
\BAhhline{&--------------------}
&		&		&	\vrule&3	&	1	&		&		&	\vrule&1	&		&		&		&\vrule&	1	&		&		&		&\vrule&	\frac{3p+1}{2}	&	\frac{3p-1}{2}	\\
&		&		&	\vrule&1	&	2	&	\ddots	&		&	\vrule&	&		&		&		&\vrule&		&		&		&		&\vrule&		&		\\
&		&		&	\vrule&	&	\ddots	&	\ddots	&	1	&	\vrule&	&		&		&		&\vrule&		&		&		&		&\vrule&		&		\\
&		&		&	\vrule&	&		&	1	&	2	&	\vrule&	&		&		&		&\vrule&		&		&		&		&\vrule&		&			\\
\BAhhline{&--------------------}
&		&		&	\vrule&1	&		&		&		&\vrule&	2	&	1	&		&		&\vrule&		&		&		&		&\vrule&	p	&	\frac{p-1}{2}\\
&		&		&	\vrule&	&		&		&		&	\vrule&1	&	2	&	\ddots	&		&\vrule&		&		&		&		&\vrule&		&		\\
&		&		&	\vrule&	&		&		&		&	\vrule&	&	\ddots	&	\ddots	&	1	&\vrule&		&		&		&		&\vrule&		&			\\
&		&		&	\vrule&	&		&		&		&	\vrule&	&		&	1	&	2	&\vrule&		&		&		&		&\vrule&		&		\\
\BAhhline{&--------------------}
&		&		&	\vrule&1	&		&		&		&	\vrule&	&		&		&		&\vrule&	2	&	1	&		&		&\vrule&	\frac{p+1}{2}	&	p	\\
&		&		&	\vrule&	&		&		&		&	\vrule&	&		&		&		&\vrule&	1	&	2	&	\ddots	&		&\vrule&		&		\\
&		&		&	\vrule&	&		&		&		&	\vrule&	&		&		&		&\vrule&		&	\ddots	&	\ddots	&	1	&\vrule&		&		\\
&		&		&	\vrule&	&		&		&		&	\vrule&	&		&		&		&\vrule&		&		&	1	&	2	&	\vrule&	&	\\
\BAhhline{&--------------------}
&	-q	&	\frac{-q-1}{2}	&\vrule&	\frac{3p+1}{2}	&		&		&		&\vrule&	p	&		&		&		&\vrule&	\frac{p+1}{2}	&		&		&		&\vrule&	\frac{5p^2-2p+4q+1}{4}	&	\frac{2p^2-p+q}{2}	\\
&	\frac{-q+1}{2}	&	-q	&	\vrule&\frac{3p-1}{2}	&		&		&		&\vrule&	\frac{p-1}{2}	&		&		&		&\vrule&	p	&		&		&		&\vrule&	\frac{2p^2-p+q}{2} &	\frac{5p^2-6p+4q+1}{4}	\\
}
\endgroup
 \]
\end{Tiny}

\begin{figure}[htb]
\includegraphics[scale=0.8]{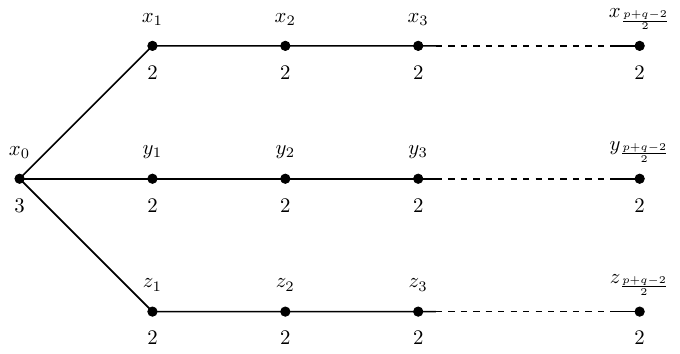}
\caption{The plumbing graph corresponding to  $P$.}\label{fig:plumbed-manifold}
\end{figure} 
 Since $Q_X$ is a presentation matrix for $H_1(\Sigma_3(L);\Q)$, we see that  $\rk (Q_X)=n-2$. Next, we will show that $b_2^+(X)\geq n-2$. Consider the $(n-4)\times (n-4)$ submatrix of $Q_X$ corresponding to the curves 
\[x_0, x_1,\dots, x_{\frac{p+q-2}{2}}, y_1,\dots, y_{\frac{p+q-2}{2}}, z_1,\dots, z_{\frac{p+q-2}{2}}.\] This is the intersection matrix for the plumbed manifold $P$ given in Figure~\ref{fig:plumbed-manifold}. By~\cite[Theorem~5.1]{Neumann-Raymond:seifert-manifolds}, $\partial P$ is the Seifert fibered manifold given by 
\[
M(0; 3, (p+q,p+q-2), (p+q,p+q-2), (p+q,p+q-2)),
\]
which has Euler number
\[
3-\frac{3(p+q-2)}{p+q}=\frac{6}{p+q}>0.
\]
By~\cite[Theorem~5.2]{Neumann-Raymond:seifert-manifolds}, this implies that the intersection form of $P$ is positive definite. 
Additionally, the top left $2\times 2$ submatrix of $Q_X$ is positive definite since it has both positive trace and positive determinant. Moreover, note that the curves $u_1$ and $u_2$ do not link $x_0, x_1,\dots, x_{\frac{p+q-2}{2}}, y_1,\dots, y_{\frac{p+q-2}{2}}, z_1,\dots, z_{\frac{p+q-2}{2}}$. This implies that the top left $(n-2)\times (n-2)$ submatrix of $Q_X$ is positive definite, and thus $b_2^+(X)\geq n-2$. Since $\rk(Q_X)=n-2$ and $b_2^+(X)\geq n-2$, $(H_2(X),Q_X)$ is positive semidefinite with $\rk(Q_X)=b_2(X)-2$.

\subsection{Non-existence of a lattice morphism $(H_2(X),Q_X)\rightarrow (\Z^{\operatorname{rank}(Q_X)}, \Id)$}\label{subsection:lattice}

For the sake of contradiction, suppose that there is a lattice morphism 
\[(H_2(X),Q_X)\rightarrow (\Z^{\operatorname{rank}(Q_X)}, \Id).\]
 Recall that $H_2(X)$ is generated by the $2$-handles associated to the curves 
\[
u_1, u_2, x_0, x_1,\dots, x_{\frac{p+q-2}{2}}, y_1,\dots, y_{\frac{p+q-2}{2}}, z_1,\dots, z_{\frac{p+q-2}{2}},v_1,v_2
\]
and $\operatorname{rank}(Q_X)=\frac{3(p+q)}{2}$. To avoid a profusion of symbols, we will use the same notation for the corresponding homology classes as well as their images via the lattice morphism $(H_2(X),Q_X)\rightarrow (\Z^{\operatorname{rank}(Q_X)}, \Id)$. Let 
\[\{e_1,\dots, e_\frac{p+q}{2}, f_1,\dots, f_\frac{p+q}{2}, g_1,\dots, g_\frac{p+q}{2}\}\]
 form a standard basis for $(\Z^{\operatorname{rank}(Q_X)},\Id)$, that is, 
 \begin{align*}e_i\cdot e_j&=f_i\cdot f_j=g_i\cdot g_j=\delta_{ij}\\
e_i\cdot f_j&=f_i\cdot g_j=g_i\cdot e_j=0 \end{align*}
for all $i,j$.  First consider the class $x_1\in H_2(X)$. Since the norm of $x_1$ is $2$, we can write 
\[x_1=e_1+e_2,\]
 up to relabeling the elements of the standard basis. Similarly, we can write 
 \begin{align*}
 x_2&=e_2+e_3,\\
  y_1&=f_1+f_2,\\
  y_2&=f_2+f_3,\\
   z_1&=g_1+g_2,\\
   z_2&=g_2+g_3,
 \end{align*}where we are using the fact that $ x_i\cdot y_j=x_i\cdot z_k= y_j\cdot z_k=0$ if $i,j,k>0$ and $x_1\cdot x_2=y_1\cdot y_2=z_1\cdot z_2=1$. Since $x_0$ has norm $3$, $x_0\cdot x_1=x_0\cdot y_1=x_0\cdot z_1=1$, and $x_0\cdot x_2=x_0\cdot y_2=x_0\cdot z_2=0$, the only possibility is $x_0=e_1+f_1+g_1$.
From here, it is easy to see that we can write $x_i= e_i+e_{i+1}$, $y_i=f_i+f_{i+1}$, and $
z_i= g_i+g_{i+1}$ for all $i=1,\dots, \frac{p+q-2}{2}$. 

We introduce the following notation.
\begin{align*}
e&=e_1-e_2+e_3-\cdots + (-1)^{\frac{p+q}{2}}e_{\frac{p+q}{2}},\\
f&=f_1-f_2+f_3-\cdots + (-1)^{\frac{p+q}{2}}f_{\frac{p+q}{2}},\\
g&=g_1-g_2+g_3-\cdots + (-1)^{\frac{p+q}{2}}g_{\frac{p+q}{2}}.\\
\end{align*}
Clearly, $e\cdot e=f\cdot f=g \cdot g=\frac{p+q}{2}$ and $e\cdot f=e\cdot g=f \cdot g=0$.

Note that we have already used all the available coordinates. We still need to determine the image of the sublattice generated by $u_1$ and $u_2$. Using the fact that
this sublattice is orthogonal to the one associated to the plumbing graph given in Figure~\ref{fig:plumbed-manifold} one quickly obtains
$u_1=\varepsilon_1 e+ \varepsilon_2 f +\varepsilon_3 g$, where $\varepsilon_i\in \{-1,0,1\}$ and exactly two of the $\varepsilon_i$ are non-zero with different signs. 
Note that $u_2$ must be of the same form. Now, considering that $u_1\cdot u_2 = \frac{p+q}{2}$, we see that the following are the possible values:
\begin{equation}\label{tab:L1-L2-values}
  \begin{array}{ r | c | c | c | c | c | c }
u_1        &e-f        &e-g        &f-e        &f-g        &g-e        &g-f\\
\hline
\multirow{2}{*}{$u_2$}       &e-g        &e-f        &f-g        &f-e        &g-f        &g-e\\
&g-f        &f-g        &g-e        &e-g        &f-e        &e-f\\
  \end{array}
\end{equation}
Above, one should read the table as saying that if $u_1=e-f$ then $u_2=e-g$ or $g-f$, if $u_1=e-g$ then $u_2=e-f$ or $f-g$, etc.

It remains to consider the curves $v_1$ and $v_2$. Since $v_1$ does not link any of the $x_i$, $y_j$, and $z_j$ curves for $i=1,\ldots,\frac{p+q}{2}$, $j=2,\ldots,\frac{p+q}{2}$, it is straightforward to see that $v_1= ae+bf+cg+d_1f_1+d_2g_1$ for some $a,b,c,d_1,d_2$. Moreover, from the matrix $Q_X$, we know that 
\begin{align*}&v_1\cdot (e_1+f_1+g_1)=\frac{3p+1}{2},\\
&v_1\cdot (f_1+f_2)=p,\\
&v_1\cdot (g_1+g_2)=\frac{p+1}{2}\end{align*}
which implies that $d_1=p$, $d_2=\frac{p+1}{2}$, and $a+b+c=0$. We have the following further restrictions on $v_1$:
\begin{align*}
&v_1\cdot u_1	=-q,\\
&v_1\cdot u_2	=\frac{-q-1}{2}.\\
\end{align*}
Similarly, we see that $v_2= a'e+b'f+c'g+d'_1f_1+d'_2g_1$ for some $a',b',c',d_1',d_2'$. We now have that 
\begin{align*}
&v_2\cdot (e_1+f_1+g_1)	=\frac{3p-1}{2},\\
&v_2\cdot (f_1+f_2)	=\frac{p-1}{2},\\
&v_2\cdot (g_1+g_2)	=p.
\end{align*}
which implies that $d_1'=\frac{p-1}{2}$, $d_2'= p$, and $a'+b'+c'=0$. We also have the following restrictions on $v_2$:
\begin{align*}
v_2\cdot u_1	&=\frac{-q+1}{2},\\
v_2\cdot u_2	&=-q.\\
\end{align*}
At this point, we have the following system of six linear equations 
\begin{align*}
&v_1\cdot u_1	=-q,\\
&v_1\cdot u_2	=\frac{-q-1}{2},\\
&v_2\cdot u_1	=\frac{-q+1}{2},\\
&v_2\cdot u_2	=-q,\\
&a+b+c	=0,\\
&a'+b'+c'=0.
\end{align*}
in the variables $a,b,c, a',b',c'$, where 
\begin{align*}&v_1= ae+bf+cg + pf_1+ \left(\frac{p+1}{2}\right)g_1,\\
 &v_2=a'e+b'f+c'g+\left(\frac{p-1}{2}\right)f_1+pg_1,\\
  &e\cdot e=f\cdot f=g \cdot g=\frac{p+q}{2},\\
  &e\cdot f=e\cdot g=f \cdot g= e\cdot f_1=e\cdot g_1=f\cdot g_1=g\cdot f_1=0,\\
  &f\cdot f_1 = g\cdot g_1 =1,
  \end{align*} and there are $12$ possible values of $\{u_1,u_2\}$ as given in Table~\eqref{tab:L1-L2-values}. Elementary linear algebra gives the following corresponding values of $a,b,c,a',b',c'$.
\begin{center}
\renewcommand{\arraystretch}{1.7}
$
\begin{array}{!{\vline width 1.1pt} c | c !{\vline width 1.1pt} c | c | c !{\vline width 1.1pt}c | c | c !{\vline width 1.1pt}}
\Xhline{1.1pt}
u_1	&u_2	&a	&b	&\hphantom{000}c\hphantom{000}	&a'	&\hphantom{000}b'\hphantom{000}	&c'\\
\Xhline{1.1pt}
e-f	&e-g	&\frac{p-q}{p+q}	&\frac{-p+q}{p+q}	&0	&\frac{p-q}{p+q}	&0	&\frac{-p+q}{p+q}\\
\hline
e-f	&g-f	&\frac{2+3(p-q)}{3(p+q)}	&\frac{2-3(p-q)}{3(p+q)}	&\frac{-4}{3(p+q)}	&\frac{1+3p}{3(p+q)}	&\frac{1+3q}{3(p+q)}	&\frac{-2-3(p+q)}{3(p+q)}\\
\hline
e-g	&e-f	&\frac{p-q}{p+q}	&\frac{1-p}{p+q}	&\frac{-1+q}{p+q}	&\frac{p-q}{p+q}	&\frac{1+q}{p+q}	&\frac{-1-p}{p+q}\\
\hline
e-g	&f-g	&\frac{2+3(p-q)}{3(p+q)}	&\frac{-1-3p}{3(p+q)}	&\frac{-1+3q}{3(p+q)}	&\frac{1+3p}{3(p+q)}	&\frac{1-3q}{3(p+q)}	&\frac{-2-3(p-q)}{3(p+q)}\\
\hline
f-e	&f-g	&1	&-1	&0	&\frac{-1+p}{p+q}	&\frac{1-q}{p+q}	&\frac{-p+q}{p+q}\\
\hline
f-e	&g-e	&\frac{2+3(p+q)}{3(p+q)}	&\frac{2-3(p+q)}{3(p+q)}	&\frac{-4}{3(p+q)}	&\frac{-2+3(p+q)}{3(p+q)}	&\frac{4}{3(p+q)}	&\frac{-2-3(p+q)}{3(p+q)}\\
\hline
f-g	&f-e	&\frac{1+p}{p+q}	&-1	&\frac{-1+q}{p+q}	&1	&\frac{1-q}{p+q}	&\frac{-1-p}{p+q}\\
\hline
f-g	&e-g	&\frac{-1+3p}{3(p+q)}	&\frac{2-3(p+q)}{3(p+q)}	&\frac{-1+3q}{3(p+q)}	&\frac{-2+3(p-q)}{3(p+q)}	&\frac{4}{3(p+q)}	&\frac{-2-3(p-q)}{3(p+q)}\\
\hline
g-e	&g-f	&1	&\frac{1-p}{p+q}	&\frac{-1-q}{p+q}	&\frac{-1+p}{p+q}	&\frac{1+q}{p+q}	&-1\\
\hline
g-e	&f-e	&\frac{2+3(p+q)}{3(p+q)}	&\frac{-1-3p}{3(p+q)}	&\frac{-1-3q}{3(p+q)}	&\frac{-2+3(p+q)}{3(p+q)}	&\frac{1-3q}{3(p+q)}	&\frac{1-3p}{3(p+q)}\\
\hline
g-f	&g-e	&\frac{1+p}{p+q}	&\frac{-p+q}{p+q}	&\frac{-1-q}{p+q}	&1	&0	&-1\\
\hline
g-f	&e-f	&\frac{-1+3p}{3(p+q)}	&\frac{2-3(p-q)}{3(p+q)}	&\frac{-1-3q}{3(p+q)}	&\frac{-2+3(p-q)}{3(p+q)}	&\frac{1+3q}{3(p+q)}	&\frac{1-3p}{3(p+q)}\\
\Xhline{1.2pt}
\end{array}
$
\end{center}
\vspace{1cm}

\noindent There is no integer solution set since $p$ and $q$ are distinct positive integers, and thus, we have reached the desired contradiction.
\bibliographystyle{amsalpha}
\def\MR#1{}
\bibliography{bib}

\providecommand{\bysame}{\leavevmode\hbox to3em{\hrulefill}\thinspace}
\providecommand{\MR}{\relax\ifhmode\unskip\space\fi MR }
\providecommand{\MRhref}[2]{%
  \href{http://www.ams.org/mathscinet-getitem?mr=#1}{#2}
}
\providecommand{\href}[2]{#2}
\begin{thebibliography}{Ace20b}

\bibitem[Ace20a]{Aceto:2019-1}
Paolo Aceto, \emph{Arborescent link concordance}, in preparation, 2020.

\bibitem[Ace20b]{Aceto:2020-1}
Paolo Aceto, \emph{Rational homology cobordisms of plumbed manifolds}, Algebr.
  Geom. Topol. \textbf{20} (2020), no.~3, 1073--1126. \MR{4105549}

\bibitem[AK80]{Akbulut-Kirby:branched-covers}
Selman Akbulut and Robion Kirby, \emph{Branched covers of surfaces in
  {$4$}-manifolds}, Math. Ann. \textbf{252} (1979/80), no.~2, 111--131.
  \MR{593626}

\bibitem[Bry17]{Bryant:2017-1}
Kathryn Bryant, \emph{Slice implies mutant ribbon for odd $5$-stranded pretzel
  knots}, Algebr. Geom. Topol. \textbf{17} (2017), no.~6, 3621--3664.
  \MR{3709656}

\bibitem[CG86]{Casson-Gordon:1986-1}
Andrew Casson and Cameron~McA. Gordon, \emph{Cobordism of classical knots}, \`A
  la recherche de la topologie perdue, Birkh\"auser Boston, Boston, MA, 1986,
  With an appendix by P. M. Gilmer, pp.~181--199. \MR{900 252}

\bibitem[Cha06]{Cha:2006-1}
Jae~Choon Cha, \emph{The effect of mutation on link concordance, $3$-manifolds,
  and the {M}ilnor invariants}, J. Knot Theory Ramifications \textbf{15}
  (2006), no.~2, 239--257. \MR{2207908}

\bibitem[Cha10]{Cha:2010-1}
Jae~Choon Cha, \emph{Link concordance, homology cobordism, and
  {H}irzebruch-type defects from iterated $p$-covers}, J. Eur. Math. Soc.
  (JEMS) \textbf{12} (2010), no.~3, 555--610.

\bibitem[CP18]{Choe-Park:2018-1}
Dong~Heon Choe and Kyungbae Park, \emph{Spherical $3$-manifolds bounding
  rational homology balls}, 2018, Available at
  \href{https://arxiv.org/abs/1803.08749}{https://arxiv.org/abs/1803.08749}, to
  appear: \textit{Michigan Math.\ J.}

\bibitem[Fox62]{Fox:1961-1}
Ralph~H. Fox, \emph{Some problems in knot theory}, Topology of 3-manifolds and
  related topics ({P}roc. {T}he {U}niv. of {G}eorgia {I}nstitute, 1961),
  Prentice-Hall, Englewood Cliffs, N.J., 1962, pp.~168--176. \MR{0140100}

\bibitem[GJ11]{Greene-Jabuka:2011-1}
Joshua Greene and Stanislav Jabuka, \emph{The slice-ribbon conjecture for
  $3$-stranded pretzel knots}, Amer. J. Math. \textbf{133} (2011), no.~3,
  555--580. \MR{2808326}

\bibitem[HKL10]{Herald-Kirk-Livingston:2010-1}
Chris Herald, Paul Kirk, and Charles Livingston, \emph{Metabelian
  representations, twisted {A}lexander polynomials, knot slicing, and
  mutation}, Math. Z. \textbf{265} (2010), no.~4, 925--949. \MR{2652542}

\bibitem[HR85]{Hodgson-Rubinstein:1985-1}
Craig Hodgson and Joachim~H. Rubinstein, \emph{Involutions and isotopies of
  lens spaces}, Knot theory and manifolds ({V}ancouver, {B}.{C}., 1983),
  Lecture Notes in Math., vol. 1144, Springer, Berlin, 1985, pp.~60--96.
  \MR{823282}

\bibitem[Kea89]{Kearton:1989-1}
Cherry Kearton, \emph{Mutation of knots}, Proc. Amer. Math. Soc. \textbf{105}
  (1989), no.~1, 206--208. \MR{929430}

\bibitem[KL99]{Kirk-Livingston:1999-2}
Paul Kirk and Charles Livingston, \emph{Twisted knot polynomials\textup{:}
  inversion, mutation and concordance}, Topology \textbf{38} (1999), no.~3,
  663--671. \MR{1670424}

\bibitem[KL01]{Kirk-Livingston:2001-1}
Paul Kirk and Charles Livingston, \emph{Concordance and mutation}, Geom. Topol.
  \textbf{5} (2001), 831--883. \MR{1871406}

\bibitem[KL05]{Kim-Livingston:2005-1}
Se-Goo Kim and Charles Livingston, \emph{Knot mutation\textup{:} $4$-genus of
  knots and algebraic concordance}, Pacific J. Math. \textbf{220} (2005),
  no.~1, 87--105. \MR{2195064}

\bibitem[Lec12]{Lecuona:2012-1}
Ana~G. Lecuona, \emph{On the slice-ribbon conjecture for {M}ontesinos knots},
  Trans. Amer. Math. Soc. \textbf{364} (2012), no.~1, 233--285. \MR{2833583}

\bibitem[Lec15]{Lecuona:2015-1}
Ana~G. Lecuona, \emph{On the slice-ribbon conjecture for pretzel knots},
  Algebr. Geom. Topol. \textbf{15} (2015), no.~4, 2133--2173. \MR{3402337}

\bibitem[Lev94]{Levine:1994-1}
Jerome~P. Levine, \emph{Link invariants via the eta invariant}, Comment. Math.
  Helv. \textbf{69} (1994), no.~1, 82--119. \MR{95a:57009}

\bibitem[Lis07]{Lisca:2007-1}
Paolo Lisca, \emph{Lens spaces, rational balls and the ribbon conjecture},
  Geom. Topol. \textbf{11} (2007), 429--472. \MR{2302495}

\bibitem[Liv83]{Livingston:1983-1}
Charles Livingston, \emph{Knots which are not concordant to their reverses},
  Quart. J. Math. Oxford Ser. (2) \textbf{34} (1983), no.~135, 323--328.
  \MR{711524}

\bibitem[Lon14]{Long:2014-1}
Ligang Long, \emph{Slice {R}ibbon {C}onjecture, {P}retzel {K}nots and
  {M}utation}, Ph.D. thesis, The University of Texas at Austin, 2014.

\bibitem[Mil17a]{Miller:2017-2}
Allison~N. Miller, \emph{Distinguishing mutant pretzel knots in concordance},
  J. Knot Theory Ramifications \textbf{26} (2017), no.~7, 1750041, 24.
  \MR{3660096}

\bibitem[Mil17b]{Miller:2017-1}
Allison~N. Miller, \emph{The topological sliceness of $3$-strand pretzel
  knots}, Algebr. Geom. Topol. \textbf{17} (2017), no.~5, 3057--3079.
  \MR{3704252}

\bibitem[NR78]{Neumann-Raymond:seifert-manifolds}
Walter~D. Neumann and Frank Raymond, \emph{Seifert manifolds, plumbing, {$\mu
  $}-invariant and orientation reversing maps}, Algebraic and geometric
  topology ({P}roc. {S}ympos., {U}niv. {C}alifornia, {S}anta {B}arbara,
  {C}alif., 1977), Lecture Notes in Math., vol. 664, Springer, Berlin, 1978,
  pp.~163--196. \MR{518415}

\bibitem[Vir76]{Viro:1976-1}
Oleg~Ja. Viro, \emph{Nonprojecting isotopies and knots with homeomorphic
  coverings}, Zap. Nau\v cn. Sem. Leningrad. Otdel. Mat. Inst. Steklov. (LOMI)
  \textbf{66} (1976), 133--147, 207--208, Studies in topology, II. \MR{0451252}

\end{thebibliography}

\end{document}